\newtheorem{theorem}{Theorem}
\newtheorem{corollary}[theorem]{Corollary}
\newtheorem{lemma}[theorem]{Lemma}
\newtheorem{proposition}[theorem]{Proposition}
\theoremstyle{definition}
\newtheorem{example}[theorem]{Example}
\newtheorem{conjecture}[theorem]{Conjecture}
\theoremstyle{remark}
\newtheorem{remark}[theorem]{Remark}
\begin{document}
\title[EC, FLT, and powerful numbers]{Eisenstein's criterion, Fermat's last theorem, and a conjecture on powerful numbers}
\author[P.~Paparella]{Pietro Paparella}
\address{University of Washington Bothell, 18115 Campus Way NE, Bothell, WA 98011}
\email{pietrop@uw.edu}
\urladdr{http://faculty.washington.edu/pietrop/}

\begin{abstract}
Given integers $\ell > m >0$, monic polynomials $X_n$, $Y_n$, and $Z_n$ are given with the property that the complex number $\mu$ is a zero of $X_n$ if and only if the triple $(\mu,\mu+m,\mu+\ell)$ satisfies $x^n + y^n = z^n$. It is shown that the irreducibility of these polynomials implies Fermat's last theorem. It is also demonstrated, in a precise asymptotic sense, that for a majority of cases, these polynomials are irreducible via application of Eisenstein's criterion. We conclude by offering a conjecture on powerful numbers.
\end{abstract}

\maketitle

\section{Introduction}

In its original form, \emph{Fermat's last theorem} (FLT) asserts that there are no positive solutions to the Diophantine equation
\begin{equation}
x^n + y^n = z^n \label{flt}
\end{equation}
if $n>2$. As is well-known, Wiles \cite{w1995}, with the assistance of Taylor \cite{tw1995}, gave the first complete proof of FLT. 

Given integers $\ell > m >0$, we consider monic polynomials $X_n$, $Y_n$, and $Z_n$ with the property that $\mu$ is a zero of $X_n$ if and only if $(\mu,\mu+m,\mu+\ell)$ satisfies \eqref{flt}. It is shown, in a precise asymptotic sense, that for a vast majority of cases, these polynomials are irreducible via direct application of Eisenstein's criterion. Although the results fall far short of constituting a full proof of FLT -- in fact, the possibility is left open that there are infinitely-many cases to consider -- they are nevertheless appealing given that: (i) they are elementary in nature; (ii) they apply to all values of $n$ (including $n=2$); and (iii) they apply to the well-known \emph{first-case} and \emph{second-case} of \eqref{flt}. A Goldbach-type conjecture on powerful numbers is also offered.

\section{The Auxiliary Polynomials}

For fixed integers $\ell > m > 0$, let 
\begin{align}
X_n (t) = X_n (t, (\ell,m)) &:= t^n - \sum_{k=1}^{n} \binom{n}{k} t^{n - k} (\ell - m) Q_k (\ell,m),		\label{xpoly}   \\
Y_n(t) = Y_n (t, (\ell,m))&:= t^n + \sum_{k=1}^{n} (-1)^k \binom{n}{k} t^{n - k} \ell Q_k (m, m-\ell),		\label{ypoly}
\end{align}
and
\begin{equation}
Z_n(t) = Z_n (t, (\ell,m)):= t^n + \sum_{k=1}^{n} (-1)^k \binom{n}{k} t^{n - k} \left( \ell^k + (\ell-m)^k \right), \label{zpoly}
\end{equation}
where  
\begin{equation}
Q_k(\ell,m) : = \frac{\ell^k - m^k}{\ell-m} = \sum_{i=0}^{k-1} \ell^{k - 1 - i} m^i,~k=1,\dots,n. \label{qpolys}
\end{equation}

\begin{proposition}
\label{zerosflt}
If $\mu \in \mathbb{C}$, then $(\mu,\mu+m,\mu+\ell) \in \mathbb{C}^3$ satisfies \eqref{flt} if and only if $X_n (\mu) = Y_n(\mu+m) = Z_n(\mu+\ell) = 0$. 
\end{proposition}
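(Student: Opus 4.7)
The plan is to verify three polynomial identities in the single variable $t$:
\begin{align*}
X_n(t) &= t^n + (t+m)^n - (t+\ell)^n, \\
Y_n(t) &= (t-m)^n + t^n - (t+\ell-m)^n, \\
Z_n(t) &= (t-\ell)^n + (t-\ell+m)^n - t^n.
\end{align*}
Once these are established as identities in $\mathbb{Z}[t]$, the proposition follows at once: setting $t = \mu$, $t = \mu+m$, and $t = \mu+\ell$ respectively makes each right-hand side equal to the common quantity $\mu^n + (\mu+m)^n - (\mu+\ell)^n$, so the three vanishing conditions $X_n(\mu) = 0$, $Y_n(\mu+m) = 0$, $Z_n(\mu+\ell) = 0$ are pairwise equivalent to $(\mu,\mu+m,\mu+\ell)$ satisfying \eqref{flt}.

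For the first identity I would apply the binomial theorem to $(t+m)^n$ and $(t+\ell)^n$. The $t^n$ terms cancel against $t^n$, leaving the coefficient of $t^{n-k}$ (for $k \ge 1$) equal to $\binom{n}{k}(m^k - \ell^k) = -\binom{n}{k}(\ell-m)Q_k(\ell,m)$ by \eqref{qpolys}. This reproduces \eqref{xpoly} exactly. For $Z_n$, the argument is even more immediate: expansion gives a coefficient of $\binom{n}{k}\bigl((-\ell)^k + (-(\ell-m))^k\bigr) = (-1)^k\binom{n}{k}\bigl(\ell^k + (\ell-m)^k\bigr)$, which is the summand in \eqref{zpoly}.

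The $Y_n$ case requires the most sign-tracking and is the one place a small auxiliary identity is needed. Binomial expansion of $(t-m)^n$ and $(t+\ell-m)^n$ produces, in the coefficient of $t^{n-k}$, the quantity $\binom{n}{k}\bigl((-m)^k - (\ell-m)^k\bigr)$. Applying \eqref{qpolys} with the pair $(m,m-\ell)$ gives $\ell\,Q_k(m,m-\ell) = m^k - (m-\ell)^k = m^k - (-1)^k(\ell-m)^k$, and multiplying through by $(-1)^k$ yields $(-m)^k - (\ell-m)^k = (-1)^k \ell\,Q_k(m,m-\ell)$, which matches \eqref{ypoly} and completes the identification.

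The only real obstacle is bookkeeping of signs in the $Y_n$ identity; the computation for $X_n$ and $Z_n$ is a direct binomial expansion followed by the telescoping identity \eqref{qpolys}. No deeper tools are needed.
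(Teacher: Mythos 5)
Your proposal is correct and follows essentially the same route as the paper: the paper's chains of equivalences (with the substitutions $\nu=\mu+m$ and $\xi=\mu+\ell$) are exactly your three polynomial identities, verified by the same binomial expansions and the same use of the telescoping identity \eqref{qpolys}. Your sign-tracking for $Y_n$, including $\ell\,Q_k(m,m-\ell)=m^k-(m-\ell)^k$, matches the paper's computation.
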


\begin{proof}
Following the binomial theorem, notice that 
\begin{align*}
&\mu^n + (\mu+m)^n = (\mu  + \ell)^n                                                                \\
\Longleftrightarrow &\mu^n - \sum_{k=1}^{n} \binom{n}{k} \mu^{n - k} (\ell^k - m^k) = 0             \\
\Longleftrightarrow &\mu^n - \sum_{k=1}^{n} \binom{n}{k} \mu^{n - k} (\ell - m) Q_k (\ell,m) = 0    \\
\Longleftrightarrow &X_n (\mu) = 0.
\end{align*}

If $\nu := \mu+m$, then
\begin{align*}
&(\nu - m)^n + \nu^n = \left(\nu + (\ell - m)\right)^n                                                                   \\
\Longleftrightarrow &\nu^n + \sum_{k=1}^{n} \binom{n}{k} \nu^{n - k} \left((-m)^k - (\ell-m)^k \right) = 0              \\
\Longleftrightarrow &\nu^n + \sum_{k=1}^{n} (-1)^k \binom{n}{k} \nu^{n - k} \left(m^k - (-1)^k(\ell-m)^k \right) = 0    \\
\Longleftrightarrow &\nu^n + \sum_{k=1}^{n} (-1)^k \binom{n}{k} \nu^{n - k} \left(m^k - (m - \ell)^k \right) = 0        \\
\Longleftrightarrow &\nu^n + \sum_{k=1}^{n} (-1)^k \binom{n}{k} \nu^{n - k} \ell Q_k (m,m-\ell)                         \\
\Longleftrightarrow &Y_n (\nu) = Y_n(\mu+m) = 0.
\end{align*}

If $\xi := \mu + \ell$, then
\begin{align*}
&(\xi- \ell)^n + \left(\xi + (m -\ell)\right)^n = \xi^n                                                                  \\
\Longleftrightarrow &\xi^n + \sum_{k=1}^{n} \binom{n}{k} \xi^{n - k} \left((-\ell)^k + (m-\ell)^k\right) = 0            \\
\Longleftrightarrow &\xi^n + \sum_{k=1}^{n} (-1)^k \binom{n}{k} \xi^{n - k} \left(\ell^k + (-1)^k(m-\ell)^k\right) = 0  \\
\Longleftrightarrow &\xi^n + \sum_{k=1}^{n} (-1)^k \binom{n}{k} \xi^{n - k} \left(\ell^k + (\ell - m)^k\right) = 0      \\
\Longleftrightarrow &Z_n (\xi) = Z_n(\mu + \ell) = 0,
\end{align*}
and the result is established.
\end{proof}

It can be shown that if $(x,y,z) \in \mathbb{N}^3$ satisfies \eqref{flt}, with $x < y < z$, $\gcd(x,y,z)=1$, and $(\ell,m):= (z - x,y-x)$, then $\gcd(\ell,m) = 1$ \cite[p~2]{r1999}. Herein it is assumed that $\gcd{(\ell,m)}=1$.

Recall that a polynomial $f$ with coefficients from $\mathbb{Z}$ is called \emph{reducible (over $\mathbb{Z}$)} if $f=gh$, where $g$ and $h$ are polynomials of positive degree with coefficients from $\mathbb{Z}$. If $f$ is not reducible, then $f$ is called \emph{irreducible (over $\mathbb{Z}$)}.

\begin{proposition}
\label{prop:suffice}
The polynomials $X_n$, $Y_n$, and $Z_n$ are simultaneously irreducible or reducible.
\end{proposition}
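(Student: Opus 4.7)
The plan is to exhibit $Y_n$ and $Z_n$ as integer translates of $X_n$, and then invoke the fact that translation $t \mapsto t - c$, for $c \in \mathbb{Z}$, is a $\mathbb{Z}$-algebra automorphism of $\mathbb{Z}[t]$ and therefore preserves irreducibility.

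First I would rewrite the three polynomials in closed form using the binomial theorem. The equivalences in the proof of Proposition \ref{zerosflt} essentially do this already: expanding $t^n + (t+m)^n - (t+\ell)^n$ yields
\[
X_n(t) = t^n + (t+m)^n - (t+\ell)^n,
\]
and the analogous manipulations for $Y_n$ and $Z_n$ give
\[
Y_n(t) = (t-m)^n + t^n - (t+\ell-m)^n, \qquad Z_n(t) = (t-\ell)^n + (t+m-\ell)^n - t^n.
\]
Each identity comes from carrying the $k=0$ binomial terms out separately and matching with the coefficient expressions that define $X_n$, $Y_n$, $Z_n$; this is essentially a bookkeeping exercise already carried out in Proposition \ref{zerosflt}.

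Next I would observe that, with these closed forms in hand, direct substitution gives
\[
X_n(t - m) = (t-m)^n + t^n - (t+\ell-m)^n = Y_n(t), \qquad X_n(t-\ell) = (t-\ell)^n + (t+m-\ell)^n - t^n = Z_n(t).
\]
So $Y_n$ and $Z_n$ are obtained from $X_n$ by an integer shift of the variable.

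Finally, for any $c \in \mathbb{Z}$ the map $\varphi_c \colon \mathbb{Z}[t] \to \mathbb{Z}[t]$ defined by $f(t) \mapsto f(t - c)$ is a ring automorphism (with inverse $\varphi_{-c}$), and hence sends a factorization $f = gh$ with $\deg g, \deg h > 0$ to the factorization $f(t-c) = g(t-c)h(t-c)$ with factors of the same positive degrees, and vice versa. Applying $\varphi_m$ and $\varphi_\ell$ to any nontrivial factorization of $X_n$ produces nontrivial factorizations of $Y_n$ and $Z_n$, and conversely $\varphi_{-m}$ and $\varphi_{-\ell}$ transport factorizations back to $X_n$. Thus $X_n$, $Y_n$, and $Z_n$ are simultaneously reducible or irreducible.

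There is no real obstacle; the only thing to be careful about is the identification of $X_n$, $Y_n$, $Z_n$ with the three explicit translates of $t^n + (t+m)^n - (t+\ell)^n$, but this is immediate from the binomial computations already displayed in Proposition \ref{zerosflt}.
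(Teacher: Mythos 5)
Your proof is correct and takes essentially the same route as the paper: both arguments reduce the claim to the identities $X_n(t-m)=Y_n(t)$ and $X_n(t-\ell)=Z_n(t)$ together with the fact that an integer shift of the variable is an automorphism of $\mathbb{Z}[t]$ and hence preserves (ir)reducibility. The only difference is that you verify the shift identities by direct binomial expansion of the closed forms $t^n+(t+m)^n-(t+\ell)^n$, etc., whereas the paper deduces them by matching zeros via Proposition~\ref{zerosflt}; your version is, if anything, slightly more self-contained since it sidesteps any worry about multiplicities of roots.
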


\begin{proof}
Following Proposition \ref{zerosflt}, notice that 
\begin{align*}
X_n(\mu-m) = 0 
&\Longleftrightarrow (\mu-m)^n + \mu^n = (\mu - m + \ell)^n \\
&\Longleftrightarrow (\mu-m)^n + \mu^n = (\mu + \ell - m)^n \\
&\Longleftrightarrow Y_n(\mu)=0.
\end{align*}   
Thus, 
\begin{equation*}
X_n(t -m) = \prod_{\{ \mu \in \mathbb{C} \mid Y_n(\mu) = 0\}} (t - \mu) = Y_n(t).
\end{equation*}
A similar argument demonstrates that $X_n(t-\ell) = Z_n(t)$. Thus, the polynomials $X_n$, $Y_n$, and $Z_n$ are simultaneously irreducible or reducible. 
\end{proof}  

Given  
\begin{equation}
f(t) = t^n - \sum_{i=1}^n a_i t^{n-i} \in \mathbb{C}[t], \label{polyf}
\end{equation}
let
\begin{equation}
f_k (t) := t^k - \sum_{i=1}^k a_i t^{k-i},~0\leq k \leq n, \label{polyfk}    
\end{equation}
where the sum on the right is defined to be zero whenever it is empty. Notice that $f = f_n$, $f(t) = t f_{n-1}(t) - a_n$, and \( (f_j)_k = f_k\), \(0 \le k \le j\).  

\begin{lemma}
\label{polyshift}
If $f$ is the polynomial defined in \eqref{polyf}, $f_k$ is the polynomial defined in \eqref{polyfk}, and $r \in \mathbb{C}$, then 
\begin{equation*}
    f(t) = (t-r) \sum_{k=0}^{n-1} f_k(r) t^{n-1-k} + f(r). 
\end{equation*} 
\end{lemma}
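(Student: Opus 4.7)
The plan is to prove the identity by direct expansion of the right-hand side, using the telescoping identity $f_k(r) - r f_{k-1}(r) = -a_k$ for $1 \le k \le n$, which is immediate from the definition of $f_k$ in \eqref{polyfk}. (An induction on $n$ using the relation $f(t) = t f_{n-1}(t) - a_n$ together with the containment $(f_j)_k = f_k$ would also work, but the direct route avoids invoking the inductive hypothesis.)

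Writing $S(t) := \sum_{k=0}^{n-1} f_k(r) t^{n-1-k}$, I would first distribute to obtain
\[
(t-r)\,S(t) = \sum_{k=0}^{n-1} f_k(r)\, t^{n-k} - \sum_{k=0}^{n-1} r\, f_k(r)\, t^{n-1-k}.
\]
Reindexing the second sum by $j = k+1$ and then recombining the two sums yields a single telescoping sum whose coefficient of $t^{n-k}$, for $1 \le k \le n-1$, is exactly $f_k(r) - r f_{k-1}(r)$, while the leading term is $f_0(r)\, t^n = t^n$ and the trailing term is $-r f_{n-1}(r)$.

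Next, I would apply the key identity $f_k(r) - r f_{k-1}(r) = -a_k$ (verified by writing out $f_k(r) = r^k - \sum_{i=1}^{k} a_i r^{k-i}$ and $r f_{k-1}(r) = r^k - \sum_{i=1}^{k-1} a_i r^{k-i}$) to reduce the middle coefficients to $-a_k$. This gives
\[
(t-r)\, S(t) = t^n - \sum_{k=1}^{n-1} a_k t^{n-k} - r f_{n-1}(r).
\]
Finally, observing that $f(r) - r f_{n-1}(r) = -a_n$ (the same identity, at $k=n$), adding $f(r)$ to both sides produces $t^n - \sum_{k=1}^{n} a_k t^{n-k} = f(t)$, as required.

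The only potential obstacle is the bookkeeping in the index shift, which is routine; the rest is a telescoping sum driven by the single recurrence $f_k(r) - r f_{k-1}(r) = -a_k$.
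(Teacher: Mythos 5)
Your proof is correct. The key identity $f_k(r) - r f_{k-1}(r) = -a_k$ holds exactly as you verify it, $f_0(r) = 1$ makes the leading term come out right, and the index shift plus telescoping gives $(t-r)S(t) = t^n - \sum_{k=1}^{n-1} a_k t^{n-k} - r f_{n-1}(r)$, after which the $k=n$ instance of the same identity (using $f = f_n$) finishes the job. The paper instead argues by induction on $n$, using the recurrence $f(t) = t f_{n-1}(t) - a_{n+1}$ together with the compatibility $(f_j)_k = f_k$ to push the inductive hypothesis up one degree. The two arguments are driven by the same underlying recurrence --- your telescoping sum is essentially the induction unrolled --- but your version is self-contained in a single computation, avoids the need to invoke $(f_j)_k = f_k$ explicitly, and makes visible exactly where each coefficient $-a_k$ comes from; the paper's induction is slightly shorter to write but hides the telescoping inside the inductive step. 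Either is a perfectly acceptable proof of the lemma.
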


\begin{proof}
Proceed by induction on $n$. If $n=1$, then 
\begin{equation*}
f(t) = t - a_1 = t - r + r - a_1 = (t-r) + f(r),
\end{equation*}
and the base-case is established.

Assume that the result holds for every polynomial of degree $j$, where $j \geq 1$. If $f(t) =  t^{j+1} - \sum_{i=1}^{j+1} a_i t^{j+1-i}$, and $r\in\mathbb{C}$, then 
\begin{align*}
f(t)    
&= t f_j(t) - a_{j+1}                                                           \\
&= t\left((t-r)\sum_{k=0}^{j-1} (f_j)_k(r) t^{j-1-k} + f_j(r) \right) - a_{j+1}       \\
&= (t-r)\sum_{k=0}^{j-1} f_k(r) t^{j-k} + tf_j(r) - a_{j+1}                   \\
&= (t-r)\sum_{k=0}^{j-1} f_k(r) t^{j-k} + (t - r)f_j(r) + rf_j(r) - a_{j+1}   \\
&= (t-r)\sum_{k=0}^{j} f_k(r) t^{j-k} + f(r),                                  
\end{align*}
establishing the result when $n= j+1$. The entire result now follows by the principle of mathematical induction.
\end{proof}

\begin{remark}
If $f(t) = t^n - \sum_{i=1}^n a_i t^{n-i} \in \mathbb{Z}[t]$ and $r \in \mathbb{Z}$ is a zero, then, following Lemma \ref{polyshift}, 
\begin{equation*}
    f(t) = (t-r) \sum_{k=0}^{n-1} f_k(r) t^{n-1-k}, 
\end{equation*}
i.e., $f$ is reducible over \( \mathbb{Z} \).
\end{remark}

The connection to FLT is now apparent from the following result.

\begin{corollary}
\label{dm}
If \( ( x, x + m, x + \ell) \in \mathbb{N}^3 \) satisfies \eqref{flt}, then \(X_n\), \(Y_n\), and \(Z_n\) are reducible. 
\end{corollary}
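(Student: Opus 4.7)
The plan is to directly combine the three results we already have: Proposition \ref{zerosflt} produces an integer zero of $X_n$, the remark following Lemma \ref{polyshift} upgrades an integer zero to a factorization over $\mathbb{Z}$, and Proposition \ref{prop:suffice} then transfers reducibility to $Y_n$ and $Z_n$.

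First I would invoke Proposition \ref{zerosflt} with $\mu := x$. Since $(x, x+m, x+\ell)$ satisfies \eqref{flt} by hypothesis, the proposition yields $X_n(x) = 0$. Because $x \in \mathbb{N} \subset \mathbb{Z}$ and $X_n \in \mathbb{Z}[t]$ is monic (its coefficients are the integers $-\binom{n}{k}(\ell-m)Q_k(\ell,m)$), the remark following Lemma \ref{polyshift} applies and gives
\begin{equation*}
X_n(t) = (t - x) \sum_{k=0}^{n-1} (X_n)_k(x)\, t^{n-1-k},
\end{equation*}
where the cofactor lies in $\mathbb{Z}[t]$ and has degree $n-1$. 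Provided $n \geq 2$ (which is the only interesting case, and is implicit since FLT for $n \leq 2$ is not at issue), both factors have positive degree, so $X_n$ is reducible over $\mathbb{Z}$.

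Finally, I would invoke Proposition \ref{prop:suffice}, which asserts that $X_n$, $Y_n$, and $Z_n$ are simultaneously reducible or irreducible; hence $Y_n$ and $Z_n$ are reducible as well.

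There is no real obstacle here: everything is a straightforward assembly of prior results. The only subtle point worth flagging is the mild degree condition $n \geq 2$, needed so that the cofactor produced by Lemma \ref{polyshift} has positive degree and the factorization qualifies as reducibility in the sense defined above.
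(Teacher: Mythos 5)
Your proposal is correct and is exactly the argument the paper intends: the corollary is stated without proof precisely because it follows immediately from Proposition \ref{zerosflt} (giving the integer zero $x$ of $X_n$), the remark after Lemma \ref{polyshift} (turning that zero into a factorization over $\mathbb{Z}$), and Proposition \ref{prop:suffice} (transferring reducibility to $Y_n$ and $Z_n$). Your flag about needing $n \ge 2$ so the cofactor has positive degree is a fair, minor point of care.
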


\begin{remark}
Corollary \ref{dm} provides a direct method of proving FLT; indeed, if it can be shown that any of the polynomials $X_n$, $Y_n$, or $Z_n$ is irreducible, then there is no solution to \eqref{flt} of the form \( ( x, x + m, x + \ell) \in \mathbb{N}^3 \). 
\end{remark}

\section{Main Results}

The following result is the most well-known irreducibility test (see, e.g., Prasolov {\cite[Theorem 2.1.3]{p2010}}) and follows from a result due to Sch\"{o}nemann (Cox \cite{c2011}). 

\begin{theorem}[Eisenstein's criterion]
\label{thm:ec}
Let $f(t) = \sum_{k=0}^n a_k t^{n-k} \in \mathbb{Z}[t]$. If there is a prime number $p$ such that:
\begin{enumerate}
\item $p \nmid a_0$;
\item $p \mid a_k$, $k =1,\dots, n$; and
\item $p^2 \nmid a_n$,
\end{enumerate}
then $f$ is irreducible over $\mathbb{Z}$.
\end{theorem}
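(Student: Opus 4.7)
The plan is to argue by contradiction via reduction modulo $p$. Suppose, toward a contradiction, that $f = gh$ with $g, h \in \mathbb{Z}[t]$ of positive degree. The strategy is to pass to the polynomial ring $\mathbb{F}_p[t]$, where unique factorization will force a very rigid structure on the reductions $\bar g$ and $\bar h$, and then to derive a contradiction with hypothesis (3) by examining constant terms.

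Concretely, I would carry out three steps. First, conditions (1) and (2) together imply that $\bar f = \bar a_0 \, t^n$ in $\mathbb{F}_p[t]$, with $\bar a_0 \neq 0$; in particular $\deg \bar f = n$, so reduction preserves degrees and $\deg \bar g = \deg g$, $\deg \bar h = \deg h$. Second, since $\mathbb{F}_p[t]$ is a unique factorization domain in which $t$ is prime, the equality $\bar g \, \bar h = \bar a_0 \, t^n$ forces $\bar g = \bar b \, t^r$ and $\bar h = \bar c \, t^{n-r}$ for some $1 \le r \le n-1$ (using $\deg g, \deg h > 0$). Third, this says the constant terms of $g$ and $h$ in $\mathbb{Z}[t]$ are each divisible by $p$, so their product, namely $a_n$, is divisible by $p^2$, contradicting (3).

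The main obstacle is not the central computation but securing the two supporting facts: one needs reduction mod $p$ to preserve the degrees of $g$ and $h$ (this rests on $p \nmid a_0$, i.e.\ hypothesis (1)), and one needs unique factorization of $t^n$ in $\mathbb{F}_p[t]$ to conclude that $\bar g$ and $\bar h$ must be scalar multiples of powers of $t$. Both are standard. A more elementary alternative avoids quotient rings altogether: let $r$ and $s$ be the least indices for which the corresponding coefficients of $g$ and $h$ (ordered from the constant term upward) fail to be divisible by $p$, and examine the coefficient of a suitable power of $t$ in the product $gh$ modulo $p$. This variant produces the same contradiction with (3) but requires slightly more careful bookkeeping and does not materially shorten the argument.
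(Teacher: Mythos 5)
Your argument is correct, but note that the paper does not actually prove this theorem: it is quoted as a classical result, with references to Prasolov and to Cox's account of Sch\"{o}nemann's work, so there is no in-paper proof to compare against. Your reduction-mod-$p$ argument is the standard one and it is sound with the paper's definition of reducibility (a factorization $f=gh$ with both factors of positive degree over $\mathbb{Z}$, so no appeal to Gauss's lemma is needed). The three steps all check out: (1) and (2) give $\bar f = \bar a_0 t^n$ with $\bar a_0 \neq 0$, so $\deg \bar g + \deg \bar h = n = \deg g + \deg h$ forces $\deg \bar g = \deg g$ and $\deg \bar h = \deg h$; unique factorization in $\mathbb{F}_p[t]$ then gives $\bar g = \bar b\, t^r$ and $\bar h = \bar c\, t^{n-r}$ with $1 \le r \le n-1$, so $p$ divides both constant terms $g(0)$ and $h(0)$, whence $p^2 \mid g(0)h(0) = a_n$, contradicting (3). (Here $a_n$ is indeed the constant term in the paper's indexing $f(t)=\sum_{k=0}^n a_k t^{n-k}$.) The elementary variant you sketch, taking the least indices $r,s$ of coefficients of $g,h$ not divisible by $p$ and examining the coefficient of $t^{r+s}$ in $gh$, is also valid and avoids quotient rings; either version would serve.
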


With $f$ and $p$ as in Theorem \ref{thm:ec}, let
\begin{equation*}
\text{Eis}(f,p) := 
\begin{cases}
1, & \text{(i), (ii), and (iii) are satisfied;} \\
0, & \text{otherwise}.
\end{cases}
\end{equation*}

The following result is well-known in the literature on FLT (see Ribenboim \cite[(3B)(5), p.~81]{r1999} and references therein). For completeness, we include a proof that depends only on the definition of the polynomial $Q$ in \eqref{qpolys}. 

\begin{lemma}
\label{lem:qpoly}
Let $n >1$ and $p$ be a prime. If $\gcd(\ell,m) = 1$, $p \nmid n$, and $p \mid (\ell - m)$, then $p \nmid Q_n(\ell,m)$.
\end{lemma}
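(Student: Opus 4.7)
The plan is to reduce $Q_n(\ell,m)$ modulo $p$ directly from its explicit sum formula \eqref{qpolys}, and then use the hypotheses to show the reduction is nonzero.

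First I would observe that since $p \mid (\ell-m)$, we have $\ell \equiv m \pmod p$, so each summand in
\[
Q_n(\ell,m) = \sum_{i=0}^{n-1} \ell^{n-1-i} m^i
\]
reduces modulo $p$ to $m^{n-1-i} \cdot m^i = m^{n-1}$. Since there are exactly $n$ summands, this yields $Q_n(\ell,m) \equiv n\, m^{n-1} \pmod p$.

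Next I would verify that $p \nmid n\, m^{n-1}$. By hypothesis $p \nmid n$, so it suffices to check $p \nmid m$. If $p \mid m$ then, combined with $p \mid (\ell - m)$, we would have $p \mid \ell$, contradicting $\gcd(\ell,m)=1$. Hence $p \nmid m$, and therefore $p \nmid n\, m^{n-1}$, which forces $p \nmid Q_n(\ell,m)$.

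There is no real obstacle here beyond keeping the congruence bookkeeping straight; the coprimality assumption $\gcd(\ell,m)=1$ and the condition $p \nmid n$ play exactly the roles needed to ensure the two factors of the reduced expression $n m^{n-1}$ are each coprime to $p$. The whole argument uses only the definition of $Q_n$ and elementary congruences, matching the self-contained character the authors advertise before the lemma.
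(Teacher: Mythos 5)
Your proof is correct and follows essentially the same strategy as the paper's: reduce $Q_n(\ell,m)$ modulo $p$ to $n\,m^{n-1}$ and then invoke $p \nmid n$ together with coprimality. The only difference is cosmetic --- the paper substitutes $\ell = m + pj$ into $(\ell^n - m^n)/(\ell-m)$ and expands binomially, whereas you reduce each summand of the closed-form sum in \eqref{qpolys} directly; your version is arguably cleaner, and it also spells out the step $p \nmid m$ (via $\gcd(\ell,m)=1$) that the paper leaves implicit when asserting $n m^{n-1} \not\equiv 0 \pmod p$.
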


\begin{proof}
If $p \mid \ell - m$, then there is an integer $j$ such that $\ell = m + p j$. Thus, 
\begin{align*}
Q_n (\ell,m) 
&= \frac{(m+pj)^n - m^n}{pj} 								\\
&= \left( \sum_{k=0}^n \binom{n}{k} m^{n-k} p^k j^k - m^n \right)/{pj} 	\\
&= \sum_{k=1}^n \binom{n}{k} m^{n-k} p^{k-1} j^{k-1} 				\\
&= nm^{n-1} + \sum_{k=2}^n \binom{n}{k} m^{n-k} p^{k-1} j^{k-1} 		\\
&= nm^{n-1} + \sum_{k=1}^n \binom{n}{k+1} m^{n-k-1} p^{k} j^{k},
\end{align*}
and $Q_n(\ell,m) \equiv (nm^{n-1}) \not \equiv 0$ (mod $p$).
\end{proof}

If \(k\) is an integer and \(p\) is a prime, then we say that \(k\) is \emph{singly divisible by \(p\)}, denoted by \(p \mid\mid k\), whenever \(p \mid k\), but \(p^2 \nmid k \). An integer that is singly divisible by two is called \emph{singly even}. 

\begin{theorem}
\label{thm:main}
Let $X_n$ be defined as in \eqref{xpoly}. If there is a prime $p$ such that $p\mid \mid \ell - m$ and $p \nmid n$, then $X_n$ is irreducible. 
\end{theorem}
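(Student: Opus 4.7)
The plan is to verify that Eisenstein's criterion (Theorem \ref{thm:ec}) applies to $X_n$ with the prime $p$ furnished by the hypothesis. Writing $X_n(t) = \sum_{k=0}^{n} a_k t^{n-k}$, we have $a_0 = 1$ and $a_k = -\binom{n}{k}(\ell-m)Q_k(\ell,m)$ for $1 \le k \le n$.

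Condition (i) is immediate since $a_0 = 1$. For condition (ii), each $a_k$ with $k \ge 1$ carries the factor $\ell - m$, which is divisible by $p$ by hypothesis, so $p \mid a_k$ for all $k = 1,\ldots,n$.

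The only non-trivial step is condition (iii), which demands $p^2 \nmid a_n = -(\ell-m)Q_n(\ell,m)$. Here I would exploit the singly-divisible hypothesis: write $\ell - m = pu$ with $p \nmid u$, so that $a_n = -pu \, Q_n(\ell,m)$, and the task reduces to showing $p \nmid Q_n(\ell,m)$. This is precisely Lemma \ref{lem:qpoly}, whose hypotheses ($\gcd(\ell,m)=1$, $p \nmid n$, $p \mid \ell - m$) are all in force: the coprimality of $\ell$ and $m$ is the standing assumption recorded after Proposition \ref{zerosflt}, while the remaining two are the hypotheses of the theorem itself.

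With all three conditions verified, Eisenstein's criterion yields the irreducibility of $X_n$, and the proof is complete. The only conceptual work is locating Lemma \ref{lem:qpoly} as the tool that turns ``$p \mid \ell - m$'' together with ``$p \nmid n$'' into non-divisibility of $Q_n(\ell,m)$; everything else is bookkeeping on the coefficients of $X_n$.
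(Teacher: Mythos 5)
Your proposal is correct and is exactly the argument the paper intends: the paper's proof is just the one-line remark that the result is ``immediate'' from \eqref{xpoly}, Eisenstein's criterion, and Lemma \ref{lem:qpoly}, and you have correctly identified that the only substantive point is condition (iii), handled by writing $\ell - m = pu$ with $p \nmid u$ and invoking Lemma \ref{lem:qpoly} to get $p \nmid Q_n(\ell,m)$. Your write-up simply makes explicit the bookkeeping the paper leaves to the reader.
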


\begin{proof}
Immediate in view of \eqref{xpoly}, Theorem \ref{thm:ec}, and Lemma \ref{lem:qpoly}.
\end{proof}

\begin{remark}
The import of Theorem \ref{thm:main} is amplified by the following observation: a positive integer $a$ is called \emph{powerful} if $p^2$ divides $a$ for every prime $p$ that divides $a$; otherwise, it is called \emph{nonpowerful}. 

Golomb \cite{g1970} proved that if $\kappa(t)$ denotes the number of powerful numbers in the interval $[1,t]$, then
\begin{equation}
ct^{1/2} - 3t^{1/3} \leq \kappa(t) \leq c t^{1/2}, \label{golomb}
\end{equation}
where $c := \zeta(3/2)/\zeta(3) \approx 2.1733$ and $\zeta$ denotes the Riemann zeta function (Mincu and Panaitopol \cite{mp2009} give an improvement of \eqref{golomb}). Consequently, $\kappa(t)/t \longrightarrow 0$ as $t \longrightarrow \infty$.

If 
\begin{equation*} 
\Delta(t) := \{ \delta = \ell - m \in \mathbb{N}\mid 1 \leq m < \ell \leq t,~\delta~\text{powerful},~\gcd{(\ell,m)}=1 \},
\end{equation*}
then $|\Delta(t)| = \kappa(t)$. Thus, $|\Delta(t)|/t \longrightarrow 0$ as $t \longrightarrow \infty$.   
\end{remark}

In case $\ell-m$ is powerful, we offer the following results. 

\begin{theorem}
\label{thm:main2}
Let $Y_n$ be defined as in \eqref{ypoly}. If there is a prime $p$ such that $p \mid \mid \ell$ and $p \nmid n$, then $Y_n$ is irreducible.
\end{theorem}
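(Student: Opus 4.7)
The plan is to apply Eisenstein's criterion (Theorem \ref{thm:ec}) directly to $Y_n$ with the given prime $p$, using Lemma \ref{lem:qpoly} to handle the constant term. Writing $Y_n(t) = t^n + \sum_{k=1}^n a_k t^{n-k}$ with $a_k = (-1)^k \binom{n}{k} \ell Q_k(m, m-\ell)$, the leading coefficient is $1$, so condition (i) of Eisenstein's criterion is immediate.

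For condition (ii), every coefficient $a_k$ with $1 \le k \le n$ carries an explicit factor of $\ell$, and the hypothesis $p \mid\mid \ell$ in particular yields $p \mid \ell$, hence $p \mid a_k$ for $k=1,\dots,n$.

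The only point requiring actual work is condition (iii): we must show $p^2 \nmid a_n = (-1)^n \ell\, Q_n(m, m-\ell)$. Since $p \mid\mid \ell$, this reduces to showing $p \nmid Q_n(m, m-\ell)$. Here I would invoke Lemma \ref{lem:qpoly} with the pair $(m, m-\ell)$ playing the role of $(\ell, m)$. The three hypotheses of that lemma are verified as follows: first, $\gcd(m, m-\ell) = \gcd(m, \ell) = 1$ by the standing assumption that $\gcd(\ell, m) = 1$; second, $p \nmid n$ by hypothesis; and third, $p$ divides the difference $m - (m-\ell) = \ell$, again by hypothesis. The lemma then gives $Q_n(m, m-\ell) \equiv n m^{n-1} \not\equiv 0 \pmod{p}$, whence $p \nmid Q_n(m, m-\ell)$, so the $p$-adic valuation of $a_n$ is exactly $1$.

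There is no real obstacle; the proof is a direct corollary of Theorem \ref{thm:ec} and Lemma \ref{lem:qpoly}, entirely parallel to the proof of Theorem \ref{thm:main}. The only subtlety worth flagging is the role swap of the two arguments of $Q_n$ when invoking Lemma \ref{lem:qpoly}, which is what forces the hypothesis to be placed on $\ell$ rather than on $\ell - m$ as in Theorem \ref{thm:main}.
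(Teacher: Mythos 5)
Your proof is correct and takes exactly the approach the paper intends: its own proof of Theorem \ref{thm:main2} just declares the result immediate from \eqref{ypoly}, Theorem \ref{thm:ec}, and Lemma \ref{lem:qpoly}, and you have supplied the missing details, in particular the argument swap so that the lemma is applied to $Q_n(m,m-\ell)$ with difference of arguments $m-(m-\ell)=\ell$. (One negligible bookkeeping point: the lemma's computation gives $Q_n(m,m-\ell)\equiv n(m-\ell)^{n-1}\pmod{p}$, which coincides with your $nm^{n-1}$ modulo $p$ because $p\mid\ell$, so nothing is affected.)
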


\begin{proof}
Immediate in view of \eqref{ypoly}, Theorem \ref{thm:ec}, and Lemma \ref{lem:qpoly}.
\end{proof}

\begin{theorem}
\label{thm:main3}
Let $Z_n$ be defined as in \eqref{zpoly}. If $2\ell -m$ is singly even, then $Z_n$ is irreducible.
\end{theorem}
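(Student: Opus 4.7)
The plan is to apply Eisenstein's criterion (Theorem \ref{thm:ec}) with $p=2$ directly to $Z_n$. Reading off \eqref{zpoly}, the relevant coefficients are $a_0 = 1$ and $a_k = (-1)^k \binom{n}{k}\bigl(\ell^k + (\ell-m)^k\bigr)$ for $1 \leq k \leq n$, so the three things to verify are that $2 \nmid 1$, that $\ell^k + (\ell-m)^k$ is even for every $k \geq 1$, and that $4 \nmid \ell^n + (\ell-m)^n$.

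First I would use the hypothesis together with the standing assumption $\gcd(\ell,m)=1$ to pin down the parities of $\ell$ and $m$. Since $2\ell - m$ is even, $m$ must be even; then $\ell$ must be odd, for otherwise $2 \mid \gcd(\ell,m)$. Writing $m = 2m'$, the singly-even hypothesis amounts to $\ell - m'$ being odd, and combined with $\ell$ odd this forces $m'$ to be even, i.e., $m \equiv 0 \pmod{4}$. Consequently $\ell$ and $\ell-m$ are both odd and satisfy $\ell - m \equiv \ell \pmod{4}$.

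The Eisenstein conditions then fall out. Condition (i) is automatic. For (ii), each $\ell^k + (\ell-m)^k$ is a sum of two odd integers, hence even, so $2 \mid a_k$ for every $k \geq 1$. For (iii), $\ell - m \equiv \ell \pmod{4}$ gives $(\ell-m)^n \equiv \ell^n \pmod{4}$, so $\ell^n + (\ell-m)^n \equiv 2\ell^n \equiv 2 \pmod{4}$ since $\ell$ is odd; in particular $4 \nmid a_n$.

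There is no serious obstacle here; the one step requiring a little care is the opening parity analysis, since without coprimality the parallel configuration $\ell$ even, $m \equiv 2 \pmod{4}$ would also render $2\ell - m$ singly even, but would yield $2^n \mid \ell^n + (\ell-m)^n$ and so destroy condition (iii) for $n \geq 2$. The coprimality assumption is therefore exactly what is needed to isolate the good parity class.
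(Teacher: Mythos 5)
Your proof is correct and follows essentially the same route as the paper: Eisenstein at $p=2$, with the parity analysis giving $\ell$ odd and $m \equiv 0 \pmod 4$, hence $2 \mid \ell^k + (\ell-m)^k$ for all $k$ and $\ell^n + (\ell-m)^n \equiv 2\ell^n \not\equiv 0 \pmod 4$. The only cosmetic difference is that you obtain the last congruence from $\ell - m \equiv \ell \pmod 4$ rather than from a binomial expansion of $(\ell-m)^n$, and your closing observation that coprimality is what excludes the bad parity class ($\ell$ even, $m \equiv 2 \pmod 4$) is a worthwhile point the paper leaves implicit.
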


\begin{proof}
If $2\ell - m$ is singly even, then $\ell$ is odd, $m$ is even, and there is an odd integer $q$ such that $2\ell -m =2q$. As a consequence, $m = 2(\ell - q) \equiv 0$ (mod $4$). As $\ell$ and $\ell-m$ are odd, notice that 
\begin{equation*}
\left( \ell^k + (\ell-m)^k \right) \equiv 0 \pmod 2,~k=1,\dots,n.
\end{equation*}   
Moreover, since
\begin{equation*}
\ell^n + (\ell-m)^n = 2\ell^n + \sum_{k=1}^n (-1)^k \binom{n}{k} \ell^{n-k} m^k  
\end{equation*}
it follows that $\left( \ell^n + (\ell-m)^n \right)  \equiv 2\ell^n \not \equiv 0$ (mod $4$), i.e., $\text{Eis}(Z_n,2) = 1$. 
\end{proof}

\begin{example}
If $(\ell,m) = (9,4)$, $n\geq2$, $n \not \equiv 0 \pmod 5$, then $\text{Eis}(X_n,5)=1$; otherwise, if $n \equiv 0 \pmod 5$, then $\text{Eis}(Z_n,2) = 1$ since $2(9)-4 = 14$ is singly even. 
\end{example}

\begin{example}
For every positive integer \(n\), let \( P(n) := \{ (\ell,m) \in \mathbb{R}^2 \mid 0 \le m < \ell \le n,~\gcd{(\ell,m)}=1 \}\). Figure \ref{fig:1} depicts the set \( P(500) \) (please note the unorthodox position of the positive quadrant). 

Let \(p\) be a prime less than 500. If \( p \mid\mid  \ell-m\), then \( X_n (t) \) is irreducible for every positive integer \(n\) satisfying \(n \ge 500\), i.e., there is no solution to \( x^n + y^n = z^n\) of the form \((x,x+m,x+\ell) \in \mathbb{N}^3 \). Figure \ref{fig:2} contains the remaining elements of \( P(500)\) that can not be eliminated in this manner (i.e., following Theorem \ref{thm:main}). Figure \ref{fig:3} contains all pairs that can not be eliminated from Theorems \ref{thm:main} and \ref{thm:main2}. Finally, Figure \ref{fig:4} contains all elements that can not be eliminated from Theorems \ref{thm:main}, \ref{thm:main2}, and \ref{thm:main3}.

\begin{figure}[H]
\includegraphics[width=.65\linewidth]{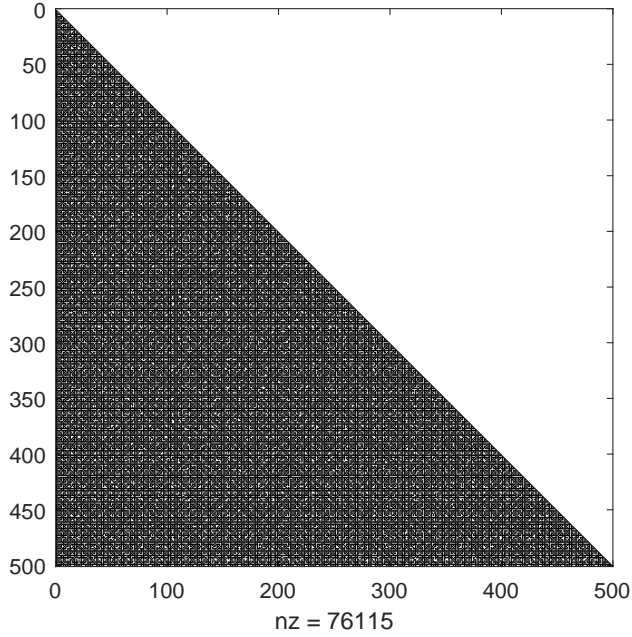}
\caption{The set \( P(500)\). Note that `nz' stands for `nonzero'.}
\label{fig:1}
\end{figure}

\begin{figure}[H]
\includegraphics[width=.65\linewidth]{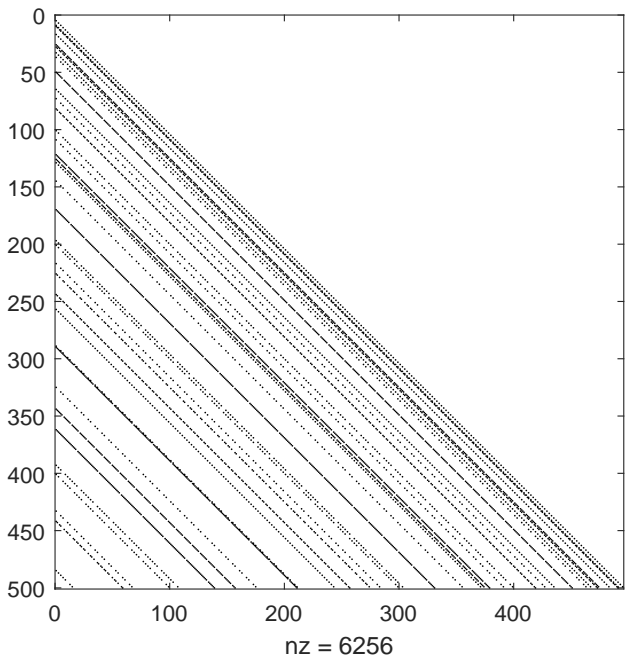}
\caption{Remaining pairs after Theorem \ref{thm:main} applied.}
\label{fig:2}
\end{figure}

\begin{figure}[H]
\includegraphics[width=.65\linewidth]{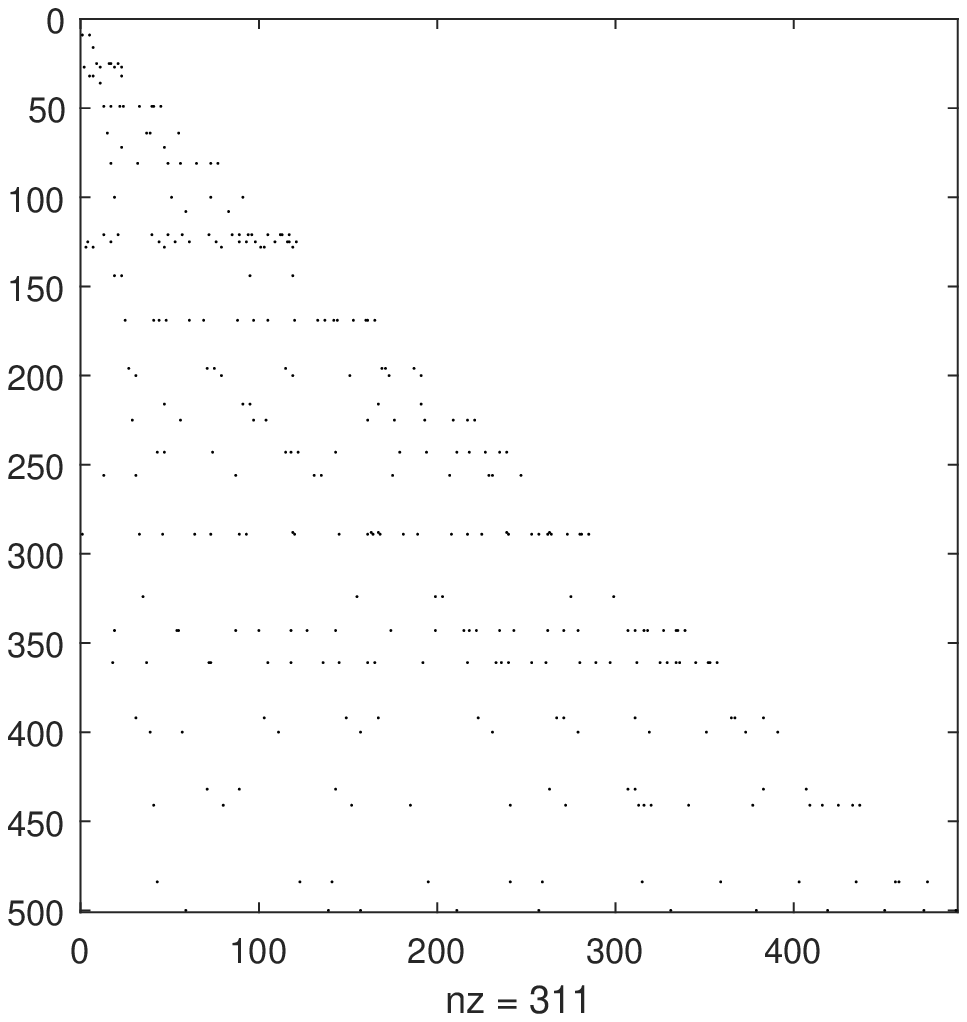}
\caption{Remaining pairs after Theorems \ref{thm:main} and \ref{thm:main2} are applied.}
\label{fig:3}
\end{figure}

\begin{figure}[H]
\includegraphics[width=.65\linewidth]{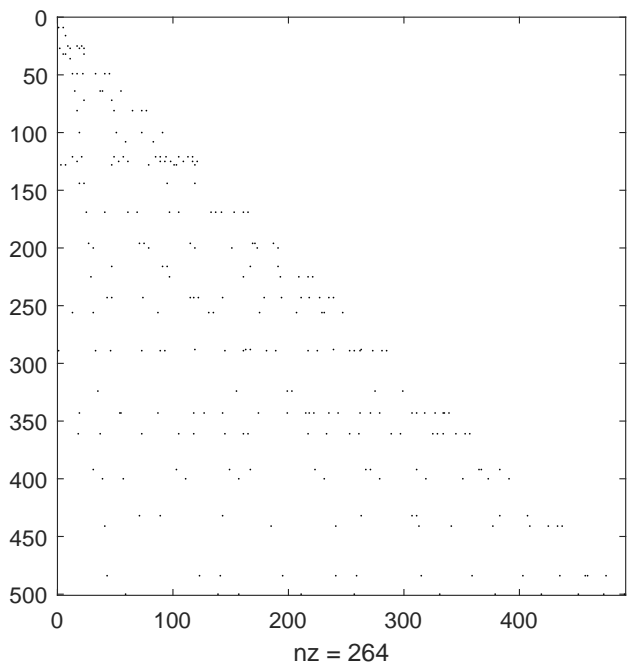}
\caption{Remaining pairs after Theorems \ref{thm:main}, \ref{thm:main2}, and \ref{thm:main3} are applied.}
\label{fig:4}
\end{figure}
\end{example}

\begin{example}
\label{badcase}
If $(\ell,m)=(9,5)$, then the irreducibility of the auxiliary polynomials cannot be asserted from the previous results.
\end{example}

As mentioned in the introduction, the above results leave the possibility that there are infinitely-many cases to resolve. The following conjecture, which generalizes Example \ref{badcase}, would not only establish this, but is seemingly of great import in and of itself and could be as difficult to resolve as the Goldbach conjecture (\cite{p2017}). 

\begin{conjecture}
If $a>1$ is powerful, then there is a prime $p$ and a powerful number $b$ such that $a=b+p$.
\end{conjecture}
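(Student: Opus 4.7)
The plan is to approach this conjecture by an additive-combinatorial argument in the spirit of the circle method and sieve techniques developed for the Goldbach conjecture. For a powerful $a > 1$, set
\[
r(a) := |\{(b,p) : a = b + p,\ b \text{ powerful},\ p \text{ prime}\}|,
\]
so that the conjecture is equivalent to the pointwise lower bound $r(a) \geq 1$ for every powerful $a > 1$.

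The first step is a density heuristic. By Golomb's estimate \eqref{golomb}, the local density of powerful numbers near $a$ is of order $c/(2\sqrt{a})$, while primes have local density $1/\log a$ by the prime number theorem. Assuming independence of the two conditions, one expects
\[
r(a) \approx \sum_{p \leq a} \frac{c}{2\sqrt{a-p}} \gg \frac{\sqrt{a}}{\log a},
\]
which tends to infinity. This strongly suggests that not only should the conjecture hold for every sufficiently large powerful $a$, but that the representation count grows without bound. Before attempting a rigorous argument, I would confirm the heuristic with a thorough computational check of the conjecture for all powerful $a$ up to a large bound (say $10^{10}$), both to gain confidence and to flag any sporadic obstruction at small values.

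To convert the heuristic into a proof, I would attempt a weighted sieve: construct a nonnegative weight $w$ supported on powerful numbers whose mean value on arithmetic progressions admits an asymptotic with a usable error term, and then bound $\sum_{p \leq a} w(a-p)$ from below using Bombieri-Vinogradov-type equidistribution of primes. As an intermediate target, the weaker statement that the conjecture holds for a density-one subset of the powerful numbers should be within reach via an exceptional-set circle-method argument, since the singular series for this problem is positive on account of there being no obvious local obstruction (the powerful numbers cover every residue class modulo small moduli).

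The main obstacle is that powerful numbers form an extremely sparse \emph{multiplicatively} defined set (counting function of order $\sqrt{t}$), and the defining condition ``every prime dividing $n$ does so at least twice'' is poorly suited to classical sieve machinery, which is designed to detect integers with few small prime factors rather than integers whose prime factors all appear with multiplicity at least two. For this reason I expect that unconditional progress on the pointwise form will require genuinely new analytic input, and, as the author remarks, may be of comparable difficulty to the binary Goldbach conjecture itself.
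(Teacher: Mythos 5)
This statement is offered in the paper as an open \emph{conjecture}: the author explicitly compares its difficulty to the Goldbach conjecture and supplies no proof, so there is nothing in the paper to match your argument against. More importantly, your proposal is not a proof of the statement either, and you should be careful not to present it as one. Every step is heuristic or aspirational: the lower bound $r(a) \gg \sqrt{a}/\log a$ rests on an unjustified independence assumption between the events ``$p$ is prime'' and ``$a-p$ is powerful''; the weighted sieve is described but not constructed (and no weight $w$ supported on powerful numbers with the required equidistribution properties is exhibited); and the circle-method route, even if carried out, would at best yield the conjecture for all but a thin exceptional set of powerful $a$, whereas the statement demands $r(a)\ge 1$ \emph{pointwise}. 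You acknowledge this yourself in the final paragraph, which is honest, but it means the proposal establishes nothing beyond a plausibility argument.

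One concrete error in the local analysis: it is false that powerful numbers cover every residue class modulo small moduli. No powerful number is congruent to $2 \pmod 4$ (an even powerful number is divisible by $4$), and more generally no powerful number is singly divisible by any prime. This does not appear to kill the singular series --- for odd powerful $a$ one simply needs $p \equiv a \pmod 4$ so that $b = a-p$ is divisible by $4$, which only thins the admissible primes --- but the positivity of the singular series has to be verified from the correct local densities of powerful numbers in residue classes, not asserted from a false premise. If you pursue this problem, the honest framing is the one in your last paragraph: the multiplicative definition of powerful numbers is poorly matched to additive machinery, and the pointwise statement is currently out of reach.
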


\bibliographystyle{amsplain}
\bibliography{ecflt}

\end{document}